\documentclass{article}
\usepackage{graphicx} 
\usepackage{amsmath}
\usepackage{amssymb}
\usepackage{amsthm}
\usepackage{mathrsfs}
\usepackage{CJKutf8}
\usepackage[utf8]{inputenc}
\usepackage{diagbox}
\usepackage[figuresright]{rotating} 
\usepackage[section]{placeins}
\usepackage{makecell}
\usepackage{longtable}
\allowdisplaybreaks[4]
\usepackage{xcolor}
\usepackage{embrac}
\usepackage{xpatch}
\usepackage{ytableau}
\usepackage{tikz}
\usetikzlibrary{positioning}

\newcommand{\sgn}{\operatorname{sgn}}

\newtheorem{theorem}{Theorem}[section]

\newtheorem{lemma}[theorem]{Lemma}

\newtheorem{corollary}[theorem]{Corollary}

\def\de{\begin{equation}}
	\def\ee{\end{equation}}

\title{On large genus asymptotics of certain Hurwitz numbers}
\author{Xiang Li\footnote{Email: lxiang1993@ustc.edu.cn.}}
\begin{document}
	\renewcommand{\today}{}
	\maketitle
	\begin{CJK}{UTF8}{gbsn}
		\begin{abstract}
			In this paper, based on the value of central character on the transposition, we find structure and large genus asymptotics of certain Hurwitz numbers.
		\end{abstract}
		\section{Introduction}
		The notion of Hurwitz numbers was introduced in \cite{Hur1}, \cite{Hur2}. The question is to count the weighted number $H^*_d(\theta^{(1)}, \dots , \theta^{(n)})$ of ramified coverings of degree $d$ of the Riemann sphere $\mathbb{P}^1$ with the ramification profiles $\theta^{(1)},\dots,\theta^{(n)}\vdash d$. Here, $\theta\vdash d$ denotes a partition $\theta$ of weight $d$. The ramified covering is called connected if the upper Riemann surface is connected. The weighted number of connected ramified coverings of genus $g$ and degree $d$ with the ramification profiles $\theta^{(1)},\dots,\theta^{(n)}\vdash d$ is called connected Hurwitz numbers (for short Hurwitz numbers), denoted by $H_{g,d}(\theta^{(1)}, \dots , \theta^{(n)})$. 
		
		For a partition $\theta=(\theta_1,\dots,\theta_{l})\vdash d$ with $\theta_1\geq\theta_2\geq\cdots\geq\theta_{l}> 0$, denote $|\theta|=\sum_{i=1}^{l}\theta_{i}=d,l(\theta)=l$, $l^*(\theta)=d-l$ and $z_\theta=\prod\limits_i m_i(\theta)! i^{m_i}$ with $m_i(\theta)$ being the multiplicity of $i$ in $\theta$.
		
		In this paper, we will prove
		\begin{theorem}\label{cH2}
			For any fixed $d\geq5,s\geq0$ and $\mu^{(1)},\dots,\mu^{(s)}\vdash d$, we have
			\begin{align}
				&H_{g,d}(\mu^{(1)},\dots,\mu^{(s)},2\,1^{d-2},2\,1^{d-2},\dots)\nonumber\\ =&\frac{2}{d!^2}\prod_{i=1}^s\frac{d!}{z_{\mu^{(i)}}} \sum_{1\leq m\leq \tbinom{d}{2}}b(\mu^{(1)},\dots,\mu^{(s)},m) m^{2g+2d-\sum_{i=1}^s l^*(\mu^{(i)})-2}, \label{cHv}
			\end{align}
			where $b(\mu^{(1)},\dots,\mu^{(s)},m)$ are rational numbers satisfying
			
			1. $b(\mu^{(1)},\dots,\mu^{(s)},\tbinom{d}{2})=1$; 
			
			2. $b(\mu^{(1)},\dots,\mu^{(s)},m)=0$, for $\tbinom{d-1}{2}<m<\tbinom{d}{2}$;
			
			3. $b(\mu^{(1)},\dots,\mu^{(s)},\tbinom{d-1}{2})=-d^{2-s}\prod_{i=1}^{s}m_1(\mu^{(i)})$;
			
			4. $b(\mu^{(1)},\dots,\mu^{(s)},\frac{d(d-3)}{2})=(d-1)^{2-s}\prod_{i=1}^{s}(m_1(\mu^{(i)})-1)$.
		\end{theorem}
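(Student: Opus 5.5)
The plan is to start from the Frobenius (Burnside) character formula for the possibly disconnected Hurwitz numbers $H^*_d$, and then pass to connected numbers by inclusion--exclusion over the connected components. By Riemann--Hurwitz, the number of simple branch points is
\[
r=2g-2+2d-\sum_{i=1}^s l^*(\mu^{(i)}).
\]
This $r$ is exactly the exponent appearing in \eqref{cHv}. Write $f_\theta(\lambda)=\frac{d!}{z_\theta}\frac{\chi^\lambda(\theta)}{\dim\lambda}$ for the central character. Then
\[
H^*_d(\mu^{(1)},\dots,\mu^{(s)},2\,1^{d-2},\dots)=\sum_{\lambda\vdash d}\Big(\frac{\dim\lambda}{d!}\Big)^2\prod_{i=1}^s f_{\mu^{(i)}}(\lambda)\, f_{2\,1^{d-2}}(\lambda)^r .
\]
Here $f_{2\,1^{d-2}}(\lambda)=\sum_{\square\in\lambda}c(\square)$ is the content sum, an integer in $[-\binom d2,\binom d2]$.

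The first key step is a structural one. A disconnected cover splits into components of degrees $d_1+\dots+d_k=d$. Summing over the ways of distributing the $r$ labelled transpositions among the components produces $\binom{r}{r_1,\dots,r_k}\prod_j a_j^{r_j}=(a_1+\dots+a_k)^r$, where $a_j$ is a content sum on the $j$-th component. The same holds for the signed inclusion--exclusion expressing the connected $H_{g,d}$ through products of connected pieces of smaller degree, or equivalently through $\log$ of the generating function in the degree variable. Either way $H_{g,d}$ is a finite rational combination of $m^r$, where $m$ runs over sums $c(\lambda^{(1)})+\dots+c(\lambda^{(k)})$ with $\lambda^{(j)}\vdash d_j$. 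The coefficients do not depend on $g$.

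The symmetry $\lambda\mapsto\lambda'$ changes $c$ to $-c$ and $\chi^\lambda(\mu)$ to $\sgn(\mu)\chi^\lambda(\mu)$. Nonvanishing forces $\prod_i\sgn(\mu^{(i)})(-1)^r=1$, so the contributions of $m$ and $-m$ coincide. Pairing them gives the factor $2$, while $m=0$ contributes nothing since $r>0$. I will normalise by $\frac{1}{d!^2}\prod_i\frac{d!}{z_{\mu^{(i)}}}$ to define $b$.

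Next I classify the large values of $m$.
\begin{itemize}
\item Only $\lambda=(d)$ (single component) gives $\binom d2$. It has $\dim=1$ and $f_\mu=d!/z_\mu$, which gives $b=1$; this is property 1.
\item A nontrivial partition of $d$ has content sum at most $\binom{d-1}{2}-1=\frac{d(d-3)}2$, attained only by $(d-1,1)$.
\item A split $d=d_1+d_2+\cdots$ with $k\ge2$ has $m\le\binom{d-1}2$, with equality only for $(d-1)\sqcup(1)$ with trivial representations.
\end{itemize}
This yields property 2.

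For property 4 I must check that no disconnected configuration reaches $\frac{d(d-3)}2$. The next candidates are $(d-2)\sqcup(2)$, with $m=\binom{d-2}2+1$, and $(d-2,1)\sqcup(1)$, with $m=\binom{d-2}{2}-1$. Both lie strictly below $\frac{d(d-3)}2$ precisely when $d\ge5$, since the gap in the first case is $d-4$; this is where the hypothesis enters. The coefficient then comes from $\lambda=(d-1,1)$, with $\dim\lambda=d-1$ and $\chi^\lambda(\mu)=m_1(\mu)-1$. This gives
\[
(d-1)^2\prod_i\frac{d!}{z_{\mu^{(i)}}}\frac{m_1(\mu^{(i)})-1}{d-1},
\]
and after normalising this is property 4.

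For property 3, the only contribution at $\binom{d-1}2$ is the inclusion--exclusion term with sign $-1$, coming from a connected degree-$(d-1)$ piece carrying the trivial representation together with one isolated sheet. I would count the monodromy tuples directly. There are $d$ choices for the fixed sheet $j$. Each $\mu^{(i)}$ must fix $j$, and the number of elements of $C_{\mu^{(i)}}$ fixing $j$ is $\frac{d!}{z_{\mu^{(i)}}}\frac{m_1(\mu^{(i)})}{d}$. The transpositions lie in $S_{d-1}$, whose leading term has content sum $\binom{d-1}2$. Dividing by $d!$ and normalising then gives $-d^{2-s}\prod_i m_1(\mu^{(i)})$.

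I expect the main obstacle to be bookkeeping the exact normalisation in this last step. That means matching the $1/d!$ weighting of tuples, the $\dim^2/(d-1)!^2$ factor for the trivial representation of $S_{d-1}$, and the factor $2$ from the $\pm m$ pairing, so that the power $d^{2-s}$ comes out exactly. I would first verify it in the case $s=0$, where it reduces to $-d^2$.
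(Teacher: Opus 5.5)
Your proposal is correct and follows essentially the same route as the paper: Frobenius' formula with the content-sum central character, the $\lambda\mapsto\lambda'$ pairing that produces the factor $2$, and the $\log$ (inclusion--exclusion) passage to connected numbers, where the multinomial sum collapses into $\bigl(\sum_j c(\lambda^{(j)})\bigr)^r$, followed by a classification of the largest attainable values of $m$. Two of your steps are details the paper leaves implicit, and both work out as you describe: the explicit normalisation for property 3 (the isolated-sheet term gives $-d^{2}\prod_i \frac{m_1(\mu^{(i)})}{d}$), and the check that the splits $(d-2)\sqcup(2)$ and $(d-2,1)\sqcup(1)$ stay strictly below $\frac{d(d-3)}{2}$ exactly when $d\ge5$.
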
	
		For the case $s=0$, Theorem \ref{cH2} was found and proved by Hurwitz \cite{Hur1} using representation of the symmetric group. This proof was briefly reviewed in \cite{DYZ}. 
		Our proof of Theorem~\ref{cH2} for general $s$ is along this line. For the case $s=1$, Do-He-Robertson \cite{DHR} proved the statement 1 of Theorem \ref{cH2}, and conjectured the statement 2 which was later proved by Yang \cite{Y}. 
		For the case $s=2$, Do-He-Robertson \cite{DHR} also deduced the statement 1 with $\forall\,\mu^{(1)}\vdash d ,\mu^{(2)}=(k^\frac{d}{k})\vdash d$ for $k>0$. 
		In \cite{L3}, we generalize the above theorem to $H_{g,d}(\mu^{(1)},\dots,\mu^{(s)},r\,1^{d-r},r\,1^{d-r},\dots)$ and to the situation when the genus of the underlying Riemann surface could be bigger than zero. Some other general results are also obtained in \cite{ALR}.
		
		For the case $s=0$, Dubrovin-Yang-Zagier \cite{DYZ} also gave new recursions for the Hurwitz numbers by simplifying the so-called Pandharipande equation, and in this way gave a new proof of the statements 1, 2 of 
		the above theorem. For the cases $s=1,2$, by using the method of \cite{DYZ}, we also achieve in \cite{L1} another proof of part of the above theorem.

		The following corollary easily follows from Theorem~\ref{cH2}.
		\begin{corollary}\label{cH3}
			For any fixed $d\geq5,s\geq0$ and $\mu^{(1)},\dots,\mu^{(s)}\vdash d$, the asymptotics of $ H_{g,d}(\mu^{(1)},\dots,\mu^{(s)},2\,1^{d-2},2\,1^{d-2},\dots)$ is given by
			\begin{align}
				H_{g,d}(\mu^{(1)},\dots,&\mu^{(s)},2\,1^{d-2},2\,1^{d-2},\dots) \sim\frac{2\cdot d!^{s-2}}{z_{\mu^{(1)}} \cdots z_{\mu^{(s)}}}\Big(\binom{d}{2}^{2g+2d-\sum_{i=1}^s l^*(\mu^{(i)})-2}\nonumber\\
				&-d^{2-s}\prod_{i=1}^{s}m_1(\mu^{(i)})\binom{d-1}{2}^{2g+2d-\sum_{i=1}^s l^*(\mu^{(i)})-2}\nonumber\\
				&+(d-1)^{2-s}\prod_{i=1}^{s}(m_1(\mu^{(i)})-1)(\frac{d(d-3)}{2})^{2g+2d-\sum_{i=1}^s l^*(\mu^{(i)})-2}\nonumber\\
				&+o((\frac{d(d-3)}{2})^{2g+2d-\sum_{i=1}^s l^*(\mu^{(i)})-2})   \Big),\,\,g\rightarrow \infty. 
			\end{align}
		\end{corollary}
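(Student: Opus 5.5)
The corollary follows directly from Theorem~\ref{cH2}. Put $N=2g+2d-\sum_{i=1}^s l^*(\mu^{(i)})-2$. Theorem~\ref{cH2} expresses the Hurwitz number as $\frac{2}{d!^2}\prod_{i}\frac{d!}{z_{\mu^{(i)}}}$ times the finite sum $\sum_{1\le m\le\binom d2} b(\mu^{(1)},\dots,\mu^{(s)},m)\,m^{N}$. The prefactor simplifies to $\frac{2\,d!^{s-2}}{z_{\mu^{(1)}}\cdots z_{\mu^{(s)}}}$, which matches the corollary.

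For $d\ge5$ we have $\frac{d(d-3)}{2}<\binom{d-1}{2}<\binom d2$, because
\[
\binom{d-1}{2}-\frac{d(d-3)}{2}=1 \quad\text{and}\quad \binom d2-\binom{d-1}{2}=d-1.
\]
I would split the sum into three parts.

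\textbf{Top terms.} These are the terms with $m\ge\frac{d(d-3)}{2}$. Statement 2 kills every $m$ strictly between $\binom{d-1}2$ and $\binom d2$. Since $\frac{d(d-3)}{2}$ and $\binom{d-1}{2}$ are consecutive integers, nothing lies between them. So the only surviving top terms are $m=\binom d2$, $m=\binom{d-1}2$ and $m=\frac{d(d-3)}2$. Statements 1, 3 and 4 give their coefficients, and these are exactly the three displayed terms of the corollary.

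\textbf{Remaining terms.} These are the terms with $m\le\frac{d(d-3)}{2}-1$. There are finitely many of them, at most $\binom d2$, and their coefficients $b$ depend only on $d$ and the $\mu^{(i)}$, not on $g$. Let $B=\max_m|b(\mu^{(1)},\dots,\mu^{(s)},m)|$. Then their total absolute value is at most
\[
\binom d2\, B\,\Bigl(\frac{d(d-3)}2-1\Bigr)^{N}.
\]
Divided by $\bigl(\frac{d(d-3)}2\bigr)^N$, this tends to $0$ as $g\to\infty$, because $N\to\infty$ linearly in $g$ and the base ratio is strictly less than $1$. So these terms are $o\bigl((\frac{d(d-3)}{2})^{N}\bigr)$.

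The $\sim$ in the corollary means exactly this exact expansion with an $o$-error term. The only mildly delicate point is checking that the three distinguished values of $m$ are distinct and correctly ordered, and that no integers lie strictly between $\frac{d(d-3)}2$ and $\binom{d-1}2$. Both are elementary. The hypothesis $d\ge5$ is inherited from Theorem~\ref{cH2}.
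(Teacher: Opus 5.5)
Your proposal is correct and is the argument the paper has in mind when it says the corollary ``easily follows'' from Theorem~\ref{cH2}. You read off the three top coefficients from statements 1--4, using $\frac{d(d-3)}{2}=\binom{d-1}{2}-1$ so that no integer $m$ lies strictly between these two values, and you bound the finitely many remaining terms by $O\bigl((\frac{d(d-3)}{2}-1)^{N}\bigr)=o\bigl((\frac{d(d-3)}{2})^{N}\bigr)$, which fills in the details the paper leaves implicit.
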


		\section{The proof of Theorem~\ref{cH2}}
		In this section, we prove Theorem~\ref{cH2}.
		\begin{proof}[\bf{Proof of Theorem~\ref{cH2}}]
			It is known that $H_d^{*}(\theta^{(1)}, \dots , \theta^{(n)})$ has the following formula \cite{Burn,Fro}:
			\begin{equation}\label{Hurwitz1}
				H_d^{*}(\theta^{(1)}, \dots , \theta^{(n)}) = \sum_{ \lambda\vdash d}(\frac{\dim \lambda}{d!})^{2} \prod_{i=1}^n f_{\theta^{(i)}}(\lambda)
			\end{equation}		
			where $\text{dim}\lambda$ is the dimension of the irreducible representations of the symmetric group $S(d)$ corresponding to $\lambda$, and 
			\begin{align}
				f_{\theta^{(i)}}(\lambda):=\frac{d!}{z_{\theta^{(i)}}}\frac{\chi_\lambda(\theta^{(i)})}{\text{dim}\lambda}.\label{f}
			\end{align}
			is the central character of $\mu^{(i)}$. Here $\chi_\lambda(\mu^{(i)})$ is the value of the irreducible character $\chi_\lambda$ on the conjugacy class $\mu^{(i)}$. According to \eqref{Hurwitz1}, 
			\begin{align}
				H^{*}_{d}(\mu^{(1)},\dots,\mu^{(s)},\underbrace{2\,1^{d-2},2\,1^{d-2},\dots}_{k})=\sum_{\lambda\vdash d}\big(\frac{d!}{\dim\lambda}\big)^{s-2}(f_{(2,\,1^{d-2})}(\lambda))^k\prod_{i=1}^s \frac{\chi_\lambda(\mu^{(i)})}{z_{\mu^{(i)}}} .\label{5}
			\end{align}				
			A classical result \cite{Fro} of the character ratio on the transposition is
			\begin{align}
				\frac{\chi_\lambda(2,\,1^{d-2})}{\chi_\lambda(1^d)}=\frac{1}{\binom{d}{2}}\Big(\sum_i\binom{\lambda_i}{2}-\sum_i\binom{\lambda'_i}{2}\Big).\label{tran}
			\end{align}
			By \eqref{f}, \eqref{tran} and $\dim\lambda=\chi_\lambda(1^d)$, it is clear that
			\begin{align}
				f_{(2,\,1^{d-2})}(\lambda)=\sum_i\binom{\lambda_i}{2}-\sum_i\binom{\lambda'_i}{2}.\label{f2}
			\end{align}
			By \eqref{f2}, 
			\begin{align}
				|f_{(2,\,1^{d-2})}(\lambda)|<\frac{d(d-3)}{2}
			\end{align}
			unless
			\begin{align}
				&f_{(2,\,1^{d-2})}(d)=\binom{d}{2},\qquad\qquad\qquad\,\,\,\, f_{(2,\,1^{d-2})}(1^d)=-\binom{d}{2},\label{f2d}\\
				&f_{(2,\,1^{d-2})}(d-1,1)=\frac{d(d-3)}{2},\,\qquad f_{(2,\,1^{d-2})}(2,1^{d-2})=-\frac{d(d-3)}{2}.\label{f2dminus1}
			\end{align}
			According to Murnaghan-Nakayama rule \cite{M,N}, we have 
			\begin{align}
				&\chi_{(d)}(\mu)=1,\qquad\qquad\qquad\qquad\qquad\qquad\quad\,\, \chi_{(1^d)}(\mu)=(-1)^{l^*(\mu)},\label{d}\\
				&\chi_{(d-1,1)}(\mu)=m_1(\mu)-1,\qquad \chi_{(2,\,1^{d-2})}(\mu)=(m_1(\mu)-1)\cdot (-1)^{l^*(\mu)}.\label{dminus1}
			\end{align}
			\begin{lemma}\label{dH}
				For any fixed $d\geq5,s\geq0$ and $\mu^{(1)},\dots,\mu^{(s)}\vdash d$, when $k+\sum_{i=1}^s l^*(\mu^{(i)})=\text{even}$,
				\begin{align}
					H^{*}_{d}(\mu^{(1)},\dots,\mu^{(s)},\underbrace{2\,1^{d-2},2\,1^{d-2},\dots}_{k})= \frac{2}{d!^2}\prod_{i=1}^s\frac{d!}{z_{\mu^{(i)}}} &\sum_{1\leq m\leq \tbinom{d}{2}}b^*(\mu^{(1)},\dots,\mu^{(s)},m) m^{k}, \label{dHv2}
				\end{align}
				where $b^*(\mu^{(1)},\dots,\mu^{(s)},m)$ are rational numbers with
				
				1. $b^*(\mu^{(1)},\dots,\mu^{(s)},\tbinom{d}{2})=1$; 
				
				2. $b^*(\mu^{(1)},\dots,\mu^{(s)},m)=0$, for $\frac{d(d-3)}{2}<m<\tbinom{d}{2}$;
				
				3. $b^*(\mu^{(1)},\dots,\mu^{(s)},\frac{d(d-3)}{2})=(d-1)^{2-s}\prod_{i=1}^{s}(m_1(\mu^{(i)})-1)$.
			\end{lemma}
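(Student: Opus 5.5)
Starting from \eqref{5}, I group the sum over $\lambda\vdash d$ according to the value $m:=|f_{(2,\,1^{d-2})}(\lambda)|$. Conjugation $\lambda\mapsto\lambda'$ is the natural pairing. By \eqref{f2}, $f_{(2,\,1^{d-2})}(\lambda')=-f_{(2,\,1^{d-2})}(\lambda)$ and $\dim\lambda'=\dim\lambda$. Also $\chi_{\lambda'}(\mu)=(-1)^{l^*(\mu)}\chi_\lambda(\mu)$, since $\chi_{\lambda'}=\sgn\cdot\chi_\lambda$ and $\sgn$ of the class $\mu$ is $(-1)^{l^*(\mu)}$. Hence the summand attached to $\lambda'$ equals the summand for $\lambda$ multiplied by $(-1)^{k+\sum_i l^*(\mu^{(i)})}$, which is $1$ under the parity hypothesis. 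For a self-conjugate $\lambda$ we have $f_{(2,\,1^{d-2})}(\lambda)=0$, so it contributes nothing once $k\ge1$. (For $k=0$ the right-hand side would need an $m=0$ term, so I take $k\ge1$, the case actually used later.) Therefore the sum equals twice a sum over one representative of each pair $\{\lambda,\lambda'\}$ with $f\neq0$, chosen so that $f_{(2,\,1^{d-2})}(\lambda)>0$. This produces the factor $2$.

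Next I collect terms by $m$. Factoring out $\frac{2}{d!^2}\prod_i\frac{d!}{z_{\mu^{(i)}}}$, I define
\[
b^*(\mu^{(1)},\dots,\mu^{(s)},m)=\sum_{\substack{\lambda\vdash d\\ f_{(2,\,1^{d-2})}(\lambda)=m}}\Big(\frac{\dim\lambda}{d!}\Big)^{2-s}\,d!^{2-s}\prod_{i=1}^s\frac{\chi_\lambda(\mu^{(i)})}{d!}\cdot d!^{\,s}/d!^{\,s},
\]
which, after simplifying, is just
\[
b^*(\mu^{(1)},\dots,\mu^{(s)},m)=\sum_{\substack{\lambda\vdash d\\ f_{(2,\,1^{d-2})}(\lambda)=m}}(\dim\lambda)^{2-s}\prod_{i=1}^s\chi_\lambda(\mu^{(i)}).
\]
To confirm the normalization, note that $(d!/\dim\lambda)^{s-2}\prod_i\chi_\lambda(\mu^{(i)})/z_{\mu^{(i)}}$ equals $\frac{1}{d!^2}\prod_i\frac{d!}{z_{\mu^{(i)}}}\,(\dim\lambda)^{2-s}\prod_i\chi_\lambda(\mu^{(i)})$. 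These $b^*$ are integers, hence rational. Since $m=f_{(2,\,1^{d-2})}(\lambda)$ is a positive integer at most $\binom d2$, the range $1\le m\le\binom d2$ is correct.

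The three properties then follow from the facts established above. By \eqref{f2d}, only $\lambda=(d)$ attains $m=\binom d2$, and $\dim(d)=1$ and $\chi_{(d)}=1$ by \eqref{d}; this gives property 1. By the bound stated before \eqref{f2d}, every other $\lambda$ with positive $f$ has $f<\frac{d(d-3)}{2}$, except $(d-1,1)$, which attains $f=\frac{d(d-3)}{2}$ by \eqref{f2dminus1}. So no $\lambda$ has $\frac{d(d-3)}{2}<f<\binom d2$, which is property 2. For property 3, $(d-1,1)$ is the unique contributor, with $\dim(d-1,1)=d-1$ and $\chi_{(d-1,1)}(\mu)=m_1(\mu)-1$ by \eqref{dminus1}, giving $(d-1)^{2-s}\prod_i(m_1(\mu^{(i)})-1)$. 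The assumption $d\ge5$ is what makes these two values distinct and separated: $\frac{d(d-3)}{2}<\binom d2$, $(d-1,1)$ is not self-conjugate, and it is not conjugate to $(d)$.

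The main obstacle is the bookkeeping rather than any real difficulty: matching the normalizations of \eqref{5} with the prefactor in \eqref{dHv2}, and checking the sign identity for conjugate partitions. The sign identity is exactly where the parity hypothesis $k+\sum_i l^*(\mu^{(i)})$ even is used.
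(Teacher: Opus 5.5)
Your proof is correct and is essentially the paper's argument. The paper groups the same summands by $|f_{(2,\,1^{d-2})}(\lambda)|=m$, builds a factor $\frac12\big(\sgn f_{(2,\,1^{d-2})}(\lambda)\big)^k$ into $b^*$, and uses the parity hypothesis only for the pairs $(d),(1^d)$ and $(d-1,1),(2,1^{d-2})$; you instead fold every conjugate pair at once via $\chi_{\lambda'}=\sgn\cdot\chi_\lambda$, which gives the same $b^*$ under that hypothesis. Your restriction to $k\ge1$ is a genuine caveat that the paper's proof also tacitly needs, since at $k=0$ the partitions with $f_{(2,\,1^{d-2})}(\lambda)=0$ break the identity. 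Two small slips: $b^*$ is not an integer when $s\ge3$ because of the factor $(\dim\lambda)^{2-s}$ (though it is still rational), and your first display for $b^*$ carries a stray factor $d!^{-s}$, although your simplified formula and normalization check are right.
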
		
			\begin{proof}
				By \eqref{5}, \eqref{f2} and \eqref{f2d}, we have the structure of disconnected Hurwitz numbers as follows: 
				\begin{align}
					H^{*}_{d}(\mu^{(1)},\dots,\mu^{(s)},\underbrace{2\,1^{d-2},2\,1^{d-2},\dots}_{k})=\frac{2}{d!^2}\prod_{i=1}^s\frac{d!}{z_{\mu^{(i)}}} &\sum_{1\leq m\leq \tbinom{d}{2}}b^*(\mu^{(1)},\dots,\mu^{(s)},m) m^{k} ,\nonumber
				\end{align}
				where
				\begin{align}
					b^*(\mu^{(1)},\dots,\mu^{(s)},m)=\frac{1}{2}\sum_{\substack{\lambda\vdash d\\|f_{(2,\,1^{d-2})}(\lambda)|=m}}\dim(\lambda)^2\big(\sgn(f_{(2,\,1^{d-2})}(\lambda))\big)^k\prod_{i=1}^{s}\frac{\chi_\lambda(\mu^{(i)})}{\dim(\lambda)}.      \label{top1}
				\end{align}
				Since $k+\sum_{i=1}^s l^*(\mu^{(i)})=\text{even}$, by \eqref{f2d} and \eqref{d}, $b^*(\mu^{(1)},\dots,\mu^{(s)},\tbinom{d}{2})=1$. From \eqref{f2dminus1} and \eqref{dminus1}, the second large term of $m$ in \eqref{dHv2} is $\frac{d(d-3)}{2}$ whose coefficients $b^*(\mu^{(1)},\dots,\mu^{(s)},\frac{d(d-3)}{2})$ equals to $(d-1)^{2-s}\prod_{i=1}^{s}(m_1(\mu^{(i)})-1)$.				
			\end{proof}
			By the relationship between connected and disconnected Hurwitz numbers, we have
			\begin{align}
				\sum_{g,d}&\sum_{\mu^{(1)},\dots,\mu^{(s)}\vdash d}\frac{1}{q!} H_{g,d}(\mu^{(1)},\dots,\mu^{(s)},2\,1^{d-2},2\,1^{d-2},\dots)x^{q} \prod_{i=1}^s p^i_{\mu^{(i)}}\nonumber\\
				=&\log(\sum_{k,d}\sum_{\mu^{(1)},\dots,\mu^{(s)}\vdash d}\frac{1}{k!} H^{*}_{d}(\mu^{(1)},\dots,\mu^{(s)},\underbrace{2\,1^{d-2},2\,1^{d-2},\dots}_{k})x^k \prod_{i=1}^s   p^i_{\mu^{(i)}})\label{rlt2}
			\end{align}
			with $q=2g+2d-\sum_{i=1}^s l^*(\mu^{(i)})-2$. Expanding the right hand side of \eqref{rlt2} by Taylor series and taking the coefficients of $\prod_{i=1}^s p^i_{\mu^{(i)}} x^q$ on both side,  we get
			\begin{align}
				H_{g,d}(\mu^{(1)},&\dots,\mu^{(s)},2\,1^{d-2},2\,1^{d-2},\dots)=H^*_{d}(\mu^{(1)},\dots,\mu^{(s)},\underbrace{2\,1^{d-2},2\,1^{d-2},\dots}_{q})\nonumber\\
				&-\frac{1}{2}\sum_{\substack{d_1,d_2\geq1\\d_1+d_2=d\\k_1,k_2\geq0\\k_1+k_2=q}}\sum_{\substack{\nu^{(1)},\dots,\nu^{(s)}\vdash d_1\\\sigma^{(1)},\dots,\sigma^{(s)}\vdash d_2\\\nu^{(i)}\cup\sigma^{(i)}=\mu^{(i)}\\i\in\{1,2,\dots,s\}}}\frac{q!}{k_1!k_2!}H^*_{d_1}(\nu^{(1)},\dots,\nu^{(s)},\underbrace{2\,1^{d_1-2},2\,1^{d_1-2},\dots}_{k_1})\nonumber\\
				&\qquad\qquad\qquad\qquad\qquad \times H^*_{d_2}(\sigma^{(1)},\dots,\sigma^{(s)},\underbrace{2\,1^{d_2-2},2\,1^{d_2-2},\dots}_{k_2})+\cdots.\nonumber
			\end{align}
			Here, the $\cdots$ term does not contain any $m^q$ with $m\geq \binom{d-1}{2}$. Notice that for $d_1,d_2\geq 1$ such that $d_1+d_2=d$, it holds that
			\begin{align}
				\binom{d_1}{2}+\binom{d_2}{2}\leq\binom{d-1}{2}.\nonumber
			\end{align}
			Using the binomial theorem, Theorem \ref{cH2} follows from Lemma \ref{dH}.
		\end{proof}

		\section*{Acknowledgement}
		I would like to thank Di Yang for his advice. This work was supported by NSFC No. 12371254 and CAS No. YSBR-032.

	\end{CJK}
\end{document}